\newcommand{\Z}{\mathbb Z}
\DeclareMathOperator{\depth}{depth}
\DeclareMathOperator{\pd}{pd}
\DeclareMathOperator{\Ass}{Ass}
\DeclareMathOperator{\im}{im}
\DeclareMathOperator{\Ext}{Ext}
\newtheorem{theorem}{Theorem}[section]
\newtheorem{lemma}[theorem]{Lemma}
\newtheorem{proposition}[theorem]{Proposition}
\newtheorem{corollary}[theorem]{Corollary}
\theoremstyle{definition}
\newtheorem{definition}[theorem]{Definition}
\title{The local homological structure of generalized splines}
\author{Kyle Stoltz}
\date{}
\begin{document}

\maketitle

\begin{abstract}
Generalized splines are a simultaneous generalization of GKM theory---which studies equivariant cohomology---and classical splines, which provide piecewise approximations of functions. Generalized splines can also be understood via schemes, with the interpolation constraints---or so-called GKM-condition---encoded by gluing along certain closed subschemes. This view provides a local--global principle, with the local pictures retaining the generalized spline structure. Consequently, the behavior of generalized splines over local rings controls certain global phenomena, such as projectivity and often freeness.

We introduce an interface between the homological study of local rings and the combinatorial study of generalized splines. We identify precisely how the generalized spline structure coordinates with the existing homological local ring machinery. This is accomplished by two exact sequences that provide a regulatory structure on the local cohomology of a generalized spline module. As an application, we use this to prove that for any edge-labeled graph $G$ with principal ideal labels, and any Cohen--Macaulay ring $R$ of Krull dimension 2 at each maximal ideal, the module of splines $R_G$ is free, provided it has finite projective dimension. As a special case, this implies every generalized spline module over $k[x,y]$ with principal edge labels is free.
\end{abstract}

\section{Introduction}
Generalized splines sit at the interface of GKM-theory, which studies equivariant cohomology, and classical splines, objects that represent piecewise approximations of functions with various smoothness conditions. In a recent paper by the author \cite{stoltz2025scheme}, generalized splines were interpreted via scheme theory, which imported the use of local--global tools into the study of generalized splines. In particular, the generalized spline structure survives flat base change, which enables the use of the combinatorial properties of generalized splines after restriction to a local picture.

There are now two parallel research directions for the local--global approach. The first is to study how the combinatorial properties simplify (or do not simplify) upon flat base change. This is useful to consider because if the edge-labeled graph degenerates locally in a suitable way, that can be leveraged to extract global results. The second approach is to study how generalized splines behave after flat base change, and in particular, over local rings. Thus, while degeneration describes how the edge-labeled graph changes after flat base change, the local ring theory itself controls what degenerations are possible. In this paper, we focus on the behavior of generalized splines over local rings.

There are two natural ways to study generalized splines over local rings. The first is by purely combinatorial means, and the second is homological. In this paper, we focus on the homological view. In particular, the aim is to explain the articulation between the known homological properties of local rings, and the combinatorial properties of generalized splines. That is, we study precisely what the generalized spline hypothesis affords us homologically over local rings. 

Historically, this mirrors the early work in classical splines initiated by Billera and Rose, who first used a local--global argument for freeness based on stars \cite[Theorem 2.3]{billera1991modules}. Later, Schenck and Stillman advanced this approach with the use of local cohomology and depth estimates to analyze spline modules \cite{schenck1997local}. In the field of generalized splines, Alt{\i}nok and Sar{\i}o\u{g}lan \cite{altinok2024multivariate} developed a homological approach to multivariate generalized splines using syzygies, while Michael DiPasquale set up a homological framework for generalized splines using an exact sequence \cite{dipasquale2017generalized}. The approach taken here follows the homological direction already initiated for generalized splines, but with a focus on local rings. This is motivated by the interpretation of generalized splines as schemes, which facilitates a local--global principle intrinsic to the ring itself \cite[Main Theorem]{stoltz2025scheme}. 

To briefly summarize the main results: there are two exact sequences that control the homological behavior of generalized splines over local rings. The first is the potential exact sequence, for which the splines are the kernel and the cosplines are the cokernel, which was first studied by DiPasquale  \cite[Lemma 2.2]{dipasquale2017generalized}. The second is the copotential exact sequence, which has the cosplines as the kernel, and the failed cosplines as the cokernel. The objects of these two exact sequences are closely related to the vertex and edge structure of the associated edge-labeled graph, and thereby provide a convenient control point for proofs. To exemplify how to appropriately use the exact sequences, we prove a general theorem on Cohen--Macaulay rings with certain properties. As a special case, the theorem shows that any multivariate generalized spline module in two variables with principal edge labels is free.

\section{The Basic Homology of Local Rings}

First, we collect several results in the literature that naturally attach to the generalized spline structure. This section does not represent new results, but rather provides a repository for the major theoretical tools most clearly suited for working with generalized spline modules. We generally follow the standard notation found in the Stacks Project \cite{stacks-project}. Throughout this section, all $R$-modules are assumed to be finitely generated.

For a commutative Noetherian local ring $(R,\mathfrak m)$ and a nonzero finitely generated $R$-module $M$ of finite projective dimension, we have the Auslander--Buchsbaum formula \cite{brunsherzog}:
\[
  \pd_R(M) + \depth_R(M) = \depth_R(R).
\]

Here $\pd_R$ refers to the projective dimension of $M$, that is, the length of its projective resolution. When $\pd_R(M) = 0$, we have that $M$ is projective over $R$. The depth of an $R$-module $\depth(M)$ can be defined by the low degree cohomology of $\Ext$:
\[
\depth(M) = \min \{ i \in \Z \mid \Ext_R^i(R/\mathfrak m, M) \neq 0 \}
\]

where $\Ext$ is the derived functor cohomology of Hom \cite[Lemma 10.72.5]{stacks-project}. We can also work equivalently by local cohomology:
\[
\depth(M) = \min \{ i \in \Z \mid H_{\mathfrak m}^i(M) \neq 0 \}
\]

where $H$ is the derived functor cohomology of the $\mathfrak m$-torsion functor \cite[Lemma 47.11.1]{stacks-project}. Furthermore, for finite direct sums we have
\[
\depth(\bigoplus_{i \in I} M_i) = \min\{ \depth(M_i) \}_{i \in I}
\]

provided the $M_i$ are finitely generated modules \cite[Example 1.8]{mathew_homological}. For Cohen--Macaulay rings $R$, the Krull dimension of $R$ and the depth of $R$ as an $R$-module over itself coincide, which allows the $\depth(R)$ portion of the Auslander--Buchsbaum formula to be replaced with $\dim(R)$. Critically, for local Noetherian Cohen--Macaulay rings, the depth drop formula has $\depth R/\langle f \rangle = \depth R - 1$ for $\mathfrak m$-regular $f \in R$, where the depth of the quotient ring is as an $R$-module \cite[Proposition 2.9]{mathew_homological}. As we will see later, this minor and predictable depth drop is precisely why principal ideal labels are favorable to the presence of free generalized spline modules.

We also have the following mechanisms for depth control. Given a short exact sequence:
\[
0 \rightarrow N' \rightarrow N \rightarrow N'' \rightarrow 0
\]

we have \cite[Lemma 10.72.6]{stacks-project}

\[
\depth N' \geq \min\{ \depth(N), \depth(N'') + 1 \}
\]
\[
\depth N \geq \min\{ \depth(N'), \depth(N'') \}
\]
and
\[
\depth N'' \geq \min\{ \depth(N), \depth(N') - 1 \}.
\]

We also know that an $R$-module $M$ (again, $R$ is local) has depth $0$ if and only if $\mathfrak m$ is an associated prime of $M$, that is, every element of $\mathfrak m$ is a zero divisor on $M$ \cite[Example 1.2]{mathew_homological}. Furthermore, we have that the short exact sequence:
\[
0 \rightarrow N' \rightarrow N \rightarrow N'' \rightarrow 0
\]
implies
\[
\Ass(N') \subseteq \Ass(N),
\]

\[
\Ass(N) \subseteq \Ass(N') \cup \Ass(N''),
\]
and
\[
\Ass(N' \oplus N'') = \Ass(N') \cup \Ass(N'')
\]

per \cite[Lemma 10.63.3]{stacks-project}. We thus have two mechanisms of depth control for $N'$ via $N$ and $N''$. The first is to provide a lower bound for the depth of $N'$. The second is to observe that if $N$ has depth $1$, then $\mathfrak m$ is not an associated prime of $N$, and since $N'$ is a submodule, $\mathfrak m$ cannot be an associated prime of $N'$. This means $N'$ cannot have depth $0$. This allows $N$ to regulate the depth of $N'$ directly, albeit in a fairly restricted way. The other mechanism requires both $N$ and $N''$ working together, but provides a more robust floor on the depth of $N'$.

The local homological theory of generalized splines then proceeds as follows: we force the module of generalized splines (and closely related modules) into exact sequences, and then use those exact sequences to force $\depth(M) = \depth(R)$. When $M$ has finite projective dimension, the Auslander--Buchsbaum formula in turn implies $\pd_R(M) = 0$, and the local--global machinery carries this data to the global picture.

\section{The Homological Theory of Generalized Splines}
The previous section collected multiple results that show that the depth of a module can be studied via the exact sequences it appears in. This section is devoted to producing exact sequences that control the depth of generalized spline modules. The applications section will then use these exact sequences to extract the main theorem.

\begin{definition}
Let $R$ be a commutative ring, $(G, \alpha)$ a finite edge-labeled graph with vertex set $V$, edge set $E$, and an arbitrary fixed orientation. Denote the edge labels as $I_e := \alpha(e)$, $e \in E$. We define the following rings:

\begin{itemize}
\item $R^V := \bigoplus_{v \in V} R$, the \textbf{ring of potential splines}. We refer to elements of $R^V$ as \textbf{potential splines}.
\item $R^E := \bigoplus_{e \in E} R/I_e$, the \textbf{ring of potential cosplines}. We refer to elements of $R^E$ as \textbf{potential cosplines}.
\end{itemize}

The \textbf{potential morphism} $\rho_G: R^V \rightarrow R^E$ is induced by mapping $(r_v)_{v \in V}$ to $(\overline{r_u - r_v})_{e_{uv} \in E}$, with the orientation ensuring only one such difference per edge, and where $\overline{r_u - r_v}$ denotes the class of $r_u - r_v$ in $R/I_e$. The kernel of the potential morphism is a ring of generalized splines $R_G$. The image $\overline{R_G} := \im(\rho_G)$ is the ring of \textbf{generalized cosplines}, and write $\overline{\rho_G} : R^V \rightarrow \overline{R_G}$ for the morphism induced by $\rho_G$. We also have the \textbf{copotential morphism} $\underline{\rho_G} : R^E \rightarrow R^E / \overline{R_G}$ induced by ring quotient in the usual way. We define $\underline{R_G} := R^E/\overline{R_G}$ to be the ring of \textbf{failed cosplines}. Thus, we can write the copotential morphism as $\underline{\rho_G} : R^E \rightarrow \underline{R_G}$.
\end{definition}

In particular, when $R$ is Noetherian, the finiteness of $G$ implies that the modules $R^V$, $R^E$, $R_G$, $\overline{R_G}$, and $\underline{R_G}$ are finitely generated $R$-modules. These give rise to two fundamental exact sequences: the \textbf{potential exact sequence} and \textbf{copotential exact sequence}. Note that the potential exact sequence was first defined by DiPasquale in \textit{Generalized splines and graphic arrangements} \cite[Lemma 2.2]{dipasquale2017generalized}.

\begin{lemma}[The Fundamental Sequences] \label{lem:exact_sequences}
Let $R$ be a commutative ring and $G$ a finite edge-labeled graph. There exist canonical exact sequences:
\[
\begin{tikzcd}
0 \arrow[r] & R_G \arrow[r] & R^V \arrow[r, "\overline{\rho_G}"] & \overline{R_G} \arrow[r] & 0
\end{tikzcd}
\]
and
\[
\begin{tikzcd}
0 \arrow[r] & \overline{R_G} \arrow[r] & R^E \arrow[r, "\underline{\rho_G}"] & \underline{R_G} \arrow[r] & 0
\end{tikzcd}
\]
\end{lemma}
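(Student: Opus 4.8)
The plan is to obtain both sequences directly from the definitions of the objects involved; there is essentially no hidden content beyond bookkeeping, so the proof amounts to checking that each arrow is a well-defined $R$-module homomorphism and then matching up kernels, images, and cokernels. I would treat the two sequences separately, in the order stated.

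For the potential exact sequence, I would first check that $\rho_G \colon R^V \to R^E$ is a well-defined homomorphism of $R$-modules: additivity and $R$-linearity of $(r_v)_{v \in V} \mapsto (\overline{r_u - r_v})_{e_{uv} \in E}$ are immediate, and the fixed orientation guarantees that exactly one representative $r_u - r_v$ is chosen per edge, so the class $\overline{r_u - r_v} \in R/I_e$ is unambiguous (a different orientation merely negates the corresponding entry, giving a canonically isomorphic sequence --- this is what ``canonical'' should be read to mean here). By definition $\overline{R_G} = \im(\rho_G)$ and $\overline{\rho_G}$ is $\rho_G$ with codomain corestricted to its image, so $\overline{\rho_G}$ is surjective; this is exactness at $\overline{R_G}$. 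Exactness at $R^V$ is the definition $R_G = \ker(\rho_G) = \ker(\overline{\rho_G})$, and exactness at $R_G$ is just the fact that $R_G \hookrightarrow R^V$ is the inclusion of a submodule. This yields the first short exact sequence.

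For the copotential exact sequence there is even less to do: $\overline{R_G}$ is by construction the $R$-submodule $\im(\rho_G) \subseteq R^E$, the failed cosplines are defined as the quotient $\underline{R_G} := R^E/\overline{R_G}$, and $\underline{\rho_G}$ is the canonical quotient map. Hence $0 \to \overline{R_G} \to R^E \xrightarrow{\underline{\rho_G}} \underline{R_G} \to 0$ is precisely the tautological short exact sequence attached to a submodule and its cokernel, so exactness holds at every spot by construction.

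The only point requiring any care --- and the closest thing to an obstacle --- is the well-definedness and orientation-(in)dependence of $\rho_G$, together with the observation that everything in sight is genuinely an $R$-module map: note that $\rho_G$ itself is \emph{not} a ring homomorphism, only additive and $R$-linear, even though its kernel $R_G$ and image $\overline{R_G}$ carry ring structures, so the sequences must be read in the category of $R$-modules. Once this is settled, one can also record the remark already flagged after the statement: when $R$ is Noetherian and $G$ is finite, $R^V$ and $R^E$ are finitely generated, hence so are the sub- and quotient modules $R_G$, $\overline{R_G}$, $\underline{R_G}$, placing all five modules in exactly the setting where the depth machinery of Section~2 applies --- which is the purpose of producing these sequences.
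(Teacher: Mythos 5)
Your proposal is correct and follows essentially the same route as the paper, which simply invokes the first isomorphism theorem for $R$-modules applied to $\overline{\rho_G}$ and $\underline{\rho_G}$; you merely spell out the well-definedness, kernel, and image bookkeeping that the paper leaves implicit. The extra remarks on orientation-independence and on $\rho_G$ being only $R$-linear rather than a ring map are accurate but not needed beyond what the paper's one-line proof already covers.
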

\begin{proof}
The given exact sequences are induced by the first isomorphism theorem for $R$-modules applied to the $R$-linear maps $\overline{\rho_G}$ and $\underline{\rho_G}$.
\end{proof}

We now consider the following propositions. They follow by applying the general homological local ring results to the potential and copotential exact sequences established in Lemma~\ref{lem:exact_sequences}.

\begin{proposition}[Depth Estimates] \label{prop:depth_estimates}
Let $R$ be a commutative Noetherian local ring with finite edge-labeled graph $G$. At the level of $R$-modules the following hold.
\begin{enumerate}
    \item \textbf{Potential Sequence Estimates:}
    \begin{itemize}
    \item $\depth(R_G) \geq \min\{\depth(R^V), \depth(\overline{R_G}) + 1 \}$
    \item $\depth(R^V) \geq \min\{\depth(R_G), \depth(\overline{R_G})\}$
    \item $\depth(\overline{R_G}) \geq \min\{\depth(R^V), \depth(R_G)-1\}$
    \item $\Ass(R_G) \subseteq \Ass(R^V)$
    \item $\Ass(R^V) \subseteq \Ass(R_G) \cup \Ass(\overline{R_G})$
    \item $\Ass(R_G \oplus \overline{R_G}) = \Ass(R_G) \cup \Ass(\overline{R_G})$
    \end{itemize}
    \item \textbf{Copotential Sequence Estimates:}
    \begin{itemize}
    \item $\depth(\overline{R_G}) \geq \min\{\depth(R^E), \depth(\underline{R_G}) + 1 \}$
    \item $\depth(R^E) \geq \min\{\depth(\underline{R_G}), \depth(\overline{R_G})\}$
    \item $\depth(\underline{R_G}) \geq \min\{\depth(R^E), \depth(\overline{R_G})-1\}$
    \item $\Ass(\overline{R_G}) \subseteq \Ass(R^E)$
    \item $\Ass(R^E) \subseteq \Ass(\overline{R_G}) \cup \Ass(\underline{R_G})$
    \item $\Ass(\overline{R_G} \oplus \underline{R_G}) = \Ass(\overline{R_G}) \cup \Ass(\underline{R_G})$
    \end{itemize}
    \item \textbf{Base Ring Properties:}
    \begin{itemize}
    \item $\depth(R^V) = \depth(R)$
    \item $\depth(R^E) = \min\{ \depth(R/I_e) \}_{e \in E}$ if $E \neq \emptyset$.
    \end{itemize}
\end{enumerate}
\end{proposition}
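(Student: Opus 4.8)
The plan is to obtain every bullet point by feeding the two exact sequences of Lemma~\ref{lem:exact_sequences} into the general short-exact-sequence machinery recorded in Section~2, together with the direct-sum depth formula for the base-ring computations. Since $R$ is Noetherian and $G$ is finite, the modules $R_G$, $R^V$, $\overline{R_G}$, $R^E$, and $\underline{R_G}$ are all finitely generated, so each cited result applies without extra hypotheses; this is really the content of the remark following the definition.

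First I would dispatch part~3, the base-ring properties. Writing $R^V = \bigoplus_{v \in V} R$ as a finite direct sum of copies of $R$ and invoking the direct-sum depth formula of \cite[Example 1.8]{mathew_homological} gives $\depth(R^V) = \min_{v \in V} \depth(R) = \depth(R)$, using that a graph has nonempty vertex set. Identically, $R^E = \bigoplus_{e \in E} R/I_e$ is a finite direct sum of finitely generated $R$-modules, so the same formula yields $\depth(R^E) = \min_{e \in E} \depth(R/I_e)$ whenever $E \neq \emptyset$.

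Next, for part~1, I would apply the three depth inequalities of \cite[Lemma 10.72.6]{stacks-project} to the potential exact sequence $0 \to R_G \to R^V \to \overline{R_G} \to 0$ with $(N', N, N'') = (R_G, R^V, \overline{R_G})$; the three inequalities become, in order, the first three bullets. For the $\Ass$ bullets I would apply \cite[Lemma 10.63.3]{stacks-project} to the same sequence: the inclusion $R_G \hookrightarrow R^V$ gives $\Ass(R_G) \subseteq \Ass(R^V)$, the exact sequence gives $\Ass(R^V) \subseteq \Ass(R_G) \cup \Ass(\overline{R_G})$, and the direct-sum statement gives $\Ass(R_G \oplus \overline{R_G}) = \Ass(R_G) \cup \Ass(\overline{R_G})$. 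Part~2 is the identical argument run on the copotential exact sequence $0 \to \overline{R_G} \to R^E \to \underline{R_G} \to 0$ with $(N', N, N'') = (\overline{R_G}, R^E, \underline{R_G})$.

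The proof is essentially bookkeeping, so there is no substantive obstacle; the only point demanding a moment's care is the convention in degenerate cases — for example when $\overline{R_G} = 0$ (no edges, or every label the unit ideal) or when $R_G = R^V$, where one reads $\depth(0) = +\infty$ and the inequalities hold vacuously — and the standing assumption that the vertex set is nonempty, so that $\depth(R^V) = \depth(R)$ rather than an empty minimum.
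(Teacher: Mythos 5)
Your proposal is correct and follows exactly the paper's own argument: the paper's proof is a one-line appeal to the potential and copotential exact sequences combined with the standard depth and associated-prime results from Section~2, which is precisely what you carry out in detail. Your extra remarks on the degenerate cases ($\overline{R_G}=0$, empty edge set) are a welcome bit of care the paper leaves implicit.
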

\begin{proof}
These follow immediately by the potential and copotential exact sequences combined with the standard results on depth and associated primes from the previous section. The finiteness of $V$ and $E$ ensures the finite sum formulas for depth apply.
\end{proof}

The next proposition applies only to Cohen--Macaulay Noetherian local rings, and is therefore somewhat more restrictive than the previous proposition.

\begin{proposition} \label{prop:cm_properties}
Let $R$ be a Cohen--Macaulay commutative Noetherian local ring with a finite edge-labeled graph $G$ where all edge labels are non-zero principal ideals generated by regular elements. Then:
\begin{itemize}
\item $\depth(R^V) = \dim(R)$
\item $\depth(R^E) = \dim(R) - 1$ (assuming $E \neq \emptyset$)
\end{itemize}
In particular, if $\dim(R) \geq 2$ and $E \neq \emptyset$, then $\mathfrak m \not \in \Ass(R^E)$, $\mathfrak m \not \in \Ass(\overline{R_G})$, and $\depth(\overline{R_G}) \geq 1$.
\end{proposition}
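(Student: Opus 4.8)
The plan is to reduce the whole statement to three ingredients already recorded in Section~2: the finite-direct-sum formula for depth, the Cohen--Macaulay depth-drop formula along a regular element, and the criterion that an $R$-module has depth $0$ precisely when $\mathfrak m$ is one of its associated primes. The first two ingredients yield the two bullet points by direct computation; the ``in particular'' clause then follows by feeding the conclusion about $R^E$ into the copotential exact sequence of Lemma~\ref{lem:exact_sequences}.

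For $R^V = \bigoplus_{v \in V} R$, the finite-direct-sum formula gives $\depth(R^V) = \depth(R)$, and the Cohen--Macaulay hypothesis replaces $\depth(R)$ by $\dim(R)$; this is just the $R^V$ line of Proposition~\ref{prop:depth_estimates} specialized to the Cohen--Macaulay case. For $R^E = \bigoplus_{e \in E} R/I_e$, write $I_e = \langle f_e \rangle$ with $f_e$ a nonzerodivisor; since $I_e$ is a proper ideal we have $f_e \in \mathfrak m$, so $f_e$ is an $\mathfrak m$-regular element of the Cohen--Macaulay ring $R$ and the depth-drop formula yields $\depth(R/I_e) = \depth(R) - 1 = \dim(R) - 1$ for every $e \in E$. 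Applying the direct-sum formula over the finite nonempty set $E$ then gives $\depth(R^E) = \min_{e \in E} \depth(R/I_e) = \dim(R) - 1$.

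For the final assertion, suppose $\dim(R) \geq 2$ and $E \neq \emptyset$. Then $\depth(R^E) = \dim(R) - 1 \geq 1$, so $R^E$ does not have depth $0$, which by the associated-prime criterion means $\mathfrak m \notin \Ass(R^E)$. The copotential exact sequence $0 \to \overline{R_G} \to R^E \to \underline{R_G} \to 0$ exhibits $\overline{R_G}$ as a submodule of $R^E$, so $\Ass(\overline{R_G}) \subseteq \Ass(R^E)$ and hence $\mathfrak m \notin \Ass(\overline{R_G})$; applying the criterion in the reverse direction, $\overline{R_G}$ does not have depth $0$, i.e.\ $\depth(\overline{R_G}) \geq 1$ (the degenerate case $\overline{R_G} = 0$ being vacuous under the convention $\depth(0) = \infty$).

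I do not anticipate a substantive obstacle: the argument is essentially bookkeeping, routing the Section~2 lemmas through the fundamental sequences. The only delicate point is the reading of ``non-zero principal ideals generated by regular elements'': one must ensure each generator $f_e$ actually lies in $\mathfrak m$ before invoking the depth-drop formula, since for a unit label one has $R/I_e = 0$ and that formula genuinely fails. I would resolve this by taking ``edge label'' to mean a proper ideal throughout (a unit label contributes the zero summand and may be discarded), stating this explicitly so that each term $R/I_e$ is nonzero and the minimum in the direct-sum computation is honestly $\dim(R) - 1$.
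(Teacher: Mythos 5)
Your proposal is correct and follows essentially the same route as the paper: the direct-sum depth formula and the Cohen--Macaulay depth-drop give the two bullet points, and the submodule inclusion $\Ass(\overline{R_G}) \subseteq \Ass(R^E)$ from the copotential sequence yields the ``in particular'' clause. Your extra caution about generators that are units (so that $R/I_e = 0$) is a reasonable refinement the paper leaves implicit, but it does not change the argument.
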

\begin{proof}
Since $R$ is Cohen--Macaulay $\depth(R) = \dim(R)$, so by Proposition~\ref{prop:depth_estimates} $\depth(R^V) = \dim(R)$. Furthermore, for any principal ideal $I_e$ with regular generator we have $\depth(R/I_e) = \depth(R) - 1$, so by Proposition~\ref{prop:depth_estimates} $\depth(R^E) = \depth(R)-1$.

Now suppose $\dim(R) \geq 2$ and $E \neq \emptyset$. Then $\depth(R^E) \geq 1$, so $\mathfrak m \not \in \Ass(R^E)$. Now since $\Ass(\overline{R_G}) \subseteq \Ass(R^E)$ (by Proposition~\ref{prop:depth_estimates}), we have $\mathfrak m \not \in \Ass(\overline{R_G})$. But that implies $\depth(\overline{R_G}) \geq 1$.
\end{proof}

Proposition~\ref{prop:cm_properties} illuminates a unique depth relationship between the potential cospline module and the cospline module. Normally, the depth of any one module in an exact sequence is regulated by the minimum of the depths of the other two modules. For low depth, however, the fact that associated primes are inherited by submodules provides a mechanism for the potential cospline module to directly regulate the depth of the cospline module, circumventing the failed cospline module entirely. Because the potential cospline module is determined entirely by the ideals themselves, and not the graph structure itself, the proposition highlights a particular situation where the graph structure is irrelevant. The following theorem continues the theme.

\begin{theorem} \label{thm:freeness}
Let $R$ be a Cohen--Macaulay Noetherian local ring of Krull dimension $2$. Let $G$ be a finite edge-labeled graph with non-zero principal edge labels generated by regular elements, such that the non-zero spline module $R_G$ has finite projective dimension. Then $R_G$ is free.
\end{theorem}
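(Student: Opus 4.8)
The plan is to reduce freeness to a depth computation via Auslander--Buchsbaum, and then to push $\depth(R_G)$ up to $\dim R = 2$ using the potential exact sequence. Since $R_G$ is finitely generated of finite projective dimension, Auslander--Buchsbaum gives $\pd_R(R_G) = \depth(R) - \depth(R_G) = 2 - \depth(R_G)$, using $\depth R = \dim R = 2$ because $R$ is Cohen--Macaulay. As $\pd_R(R_G) \geq 0$ and finitely generated projective modules over a Noetherian local ring are free, it suffices to prove $\depth(R_G) \geq 2$; this then forces $\pd_R(R_G) = 0$.

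First I would dispose of degenerate configurations. If $V = \emptyset$ then $R_G = 0$, contrary to hypothesis, so $R^V \neq 0$ and $\depth(R^V) = \depth(R) = 2$ by the Base Ring Properties in Proposition~\ref{prop:depth_estimates}. If $\overline{R_G} = 0$ (which subsumes the case $E = \emptyset$), then the potential exact sequence of Lemma~\ref{lem:exact_sequences} collapses to an isomorphism $R_G \cong R^V$, which is free, and there is nothing more to prove.

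It remains to treat the case $\overline{R_G} \neq 0$. Here $R^E \supseteq \overline{R_G} \neq 0$, so in particular $E \neq \emptyset$, and since $\dim R = 2 \geq 2$ we may invoke Proposition~\ref{prop:cm_properties} to conclude $\depth(\overline{R_G}) \geq 1$. The internal mechanism is the key point already isolated in that proposition: each nonzero quotient $R/I_e$ has depth exactly $\dim R - 1 = 1$ by the Cohen--Macaulay depth-drop along a regular non-unit, so $\depth(R^E) = 1$, hence $\mathfrak m \notin \Ass(R^E)$; associated primes are inherited by the submodule $\overline{R_G}$, so $\mathfrak m \notin \Ass(\overline{R_G})$, forcing $\depth(\overline{R_G}) \geq 1$. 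Feeding this into the depth estimate for the potential exact sequence (Proposition~\ref{prop:depth_estimates}) gives
\[
\depth(R_G) \;\geq\; \min\{\depth(R^V),\ \depth(\overline{R_G}) + 1\} \;\geq\; \min\{2,\ 2\} \;=\; 2,
\]
and Auslander--Buchsbaum then yields $\pd_R(R_G) = 0$, so $R_G$ is free.

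I do not expect a deep obstruction: the argument is an assembly of the preceding propositions. The only genuine care needed is bookkeeping around the degenerate cases --- an empty vertex or edge set, unit edge labels making some $R/I_e = 0$, or $\overline{R_G}$ itself vanishing --- together with the verification that the depth of the nonzero finite direct sum $R^E$ is still $\dim R - 1$ when some summands are zero. The conceptual crux is exactly the slack provided by Proposition~\ref{prop:cm_properties}: that the edge labels alone, via submodule-inheritance of associated primes, certify $\depth(\overline{R_G}) \geq 1$ with no reference to the graph's combinatorics, which is precisely what lets the potential-sequence estimate reach $\dim R$ when $\dim R = 2$.
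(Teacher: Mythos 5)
Your proposal is correct and follows essentially the same route as the paper: the potential exact sequence estimate combined with Proposition~\ref{prop:cm_properties} forces $\depth(R_G) \geq 2$, and Auslander--Buchsbaum then gives $\pd_R(R_G) = 0$. The only difference is that you handle the degenerate cases slightly more carefully (splitting on $\overline{R_G} = 0$ rather than just $E = \emptyset$, and noting the issue of unit generators), which is a minor refinement rather than a different argument.
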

\begin{proof}
Let $R$ be a Cohen--Macaulay Noetherian local ring of Krull dimension $2$. If $E = \emptyset$, then $R_G \cong R^V \cong R^{|V|}$, which is free. Thus we assume $E \neq \emptyset$. Notice that $\depth(R_G) \geq \min\{ \depth(R^V), \depth(\overline{R_G}) +1 \}$ (by Proposition~\ref{prop:depth_estimates}). Now by Proposition~\ref{prop:cm_properties} and the fact $R$ is a Cohen--Macaulay Noetherian local ring, we have
\begin{align*}
\depth(R_G) &\geq \min\{ \depth(R^V), \depth(\overline{R_G}) +1 \}\\
\depth(R_G) &\geq \min\{ 2, \depth(\overline{R_G}) +1 \}.
\end{align*}

Now by Proposition~\ref{prop:cm_properties} and the hypothesis on principal edge labels, we know $\depth(\overline{R_G}) \geq 1$. Thus:
\[
\depth(R_G) \geq 2
\]

Now consider the Auslander--Buchsbaum formula, noting that since $R$ is Cohen--Macaulay we have $\depth(R) = \dim(R) = 2$ and by hypothesis $\pd_R(R_G) < \infty$:
\begin{align*}
\pd_R(R_G) + \depth(R_G) &= \depth(R) \\
\pd_R(R_G) + 2 = 2 \\
\pd_R(R_G) = 0
\end{align*}

So $R_G$ has projective dimension $0$ and must therefore be projective. But every finitely generated projective module over a local ring is free, so $R_G$ is a free $R$-module as claimed.
\end{proof}

The theorem identifies a precise condition where the only barrier to freeness is the structure of the ideals and the ring, not the graph itself.

\section{Applications}
The previous section was focused entirely on the local case. In this section, we obtain a global corollary by means of local--global arguments.

\begin{figure}[ht]
    \centering
    \begin{tikzpicture}[auto, thick]
        \node[circle, draw, fill=black!10, inner sep=2pt] (v1) at (0,0) {$v_1$};
        \node[circle, draw, fill=black!10, inner sep=2pt] (v2) at (3,0) {$v_2$};
        \node[circle, draw, fill=black!10, inner sep=2pt] (v3) at (3,3) {$v_3$};
        \node[circle, draw, fill=black!10, inner sep=2pt] (v4) at (0,3) {$v_4$};

        \draw (v1) -- node[below] {$\langle y \rangle$} (v2);
        \draw (v2) -- node[right] {$\langle x \rangle$} (v3);
        \draw (v3) -- node[above] {$\langle y \rangle$} (v4);
        \draw (v4) -- node[left] {$\langle x \rangle$} (v1);
        
        \draw (v1) -- node[fill=white, inner sep=1pt] {$\langle x+y \rangle$} (v3);
    \end{tikzpicture}
    \caption{A diamond graph with principal ideal edge labels over $k[x,y]$. Theorem~\ref{thm:main_theorem} guarantees the spline module is free.}
    \label{fig:diamond}
\end{figure}
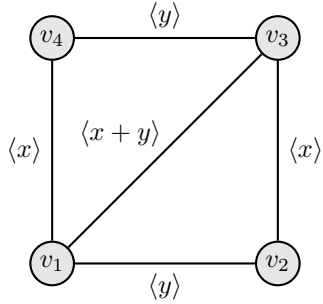

\begin{theorem}\label{thm:main_theorem}
Suppose $R$ is a Cohen--Macaulay Noetherian domain with Krull dimension $2$ at each maximal ideal of $R$. If $G$ is a finite edge-labeled graph with non-zero principal edge labels such that $R_G$ has finite projective dimension, then $R_G$ is a finite projective $R$-module. In particular, if finite projective $R$-modules are also free, then $R_G$ is free.
\end{theorem}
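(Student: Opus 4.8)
The plan is to prove projectivity by a local--global argument, reducing to Theorem~\ref{thm:freeness} at each maximal ideal. Since $R_G$ is finitely generated over the Noetherian ring $R$, it is finitely presented, and such a module is projective if and only if $(R_G)_{\mathfrak m}$ is a free $R_{\mathfrak m}$-module for every maximal ideal $\mathfrak m$ of $R$ (flatness is a local property, and a finitely generated flat module over a Noetherian local ring is free). So it suffices to fix a maximal ideal $\mathfrak m$ and show $(R_G)_{\mathfrak m}$ is free over $R_{\mathfrak m}$.

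First I would invoke the compatibility of the generalized spline construction with flat base change \cite[Main Theorem]{stoltz2025scheme}: localization $R \to R_{\mathfrak m}$ is flat, so $(R_G)_{\mathfrak m} \cong (R_{\mathfrak m})_{G'}$, where $G'$ is the edge-labeled graph on the same vertex and edge sets with edge labels $I_e R_{\mathfrak m} = \langle f_e \rangle \subseteq R_{\mathfrak m}$ (writing $I_e = \langle f_e \rangle$). There is one wrinkle to dispatch: if $f_e \notin \mathfrak m$ then $f_e$ is a unit in $R_{\mathfrak m}$ and the edge $e$ imposes the vacuous congruence, so deleting all such edges changes neither the spline module nor its isomorphism type, and leaves a graph $G_{\mathfrak m}$ whose remaining edge labels are proper. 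If no edges remain, then $(R_{\mathfrak m})_{G_{\mathfrak m}} \cong R_{\mathfrak m}^{|V|}$ is free and we are done, so assume at least one edge remains.

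Next I would check the hypotheses of Theorem~\ref{thm:freeness} for $R_{\mathfrak m}$ and $G_{\mathfrak m}$. The ring $R_{\mathfrak m}$ is Noetherian and local; it is Cohen--Macaulay since the Cohen--Macaulay property localizes; and $\dim R_{\mathfrak m} = 2$ by the hypothesis that $R$ has Krull dimension $2$ at each maximal ideal. Because $R$ is a domain, the map $R \to R_{\mathfrak m}$ is injective, so each generator $f_e$ remains nonzero in $R_{\mathfrak m}$, and in a domain a nonzero element is a non-zerodivisor; hence the edge labels of $G_{\mathfrak m}$ are non-zero principal ideals generated by regular elements. The spline module $(R_{\mathfrak m})_{G_{\mathfrak m}}$ is nonzero because it contains the diagonal copy of $R_{\mathfrak m}$. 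Finally, a projective resolution of $R_G$ localizes to a projective resolution of $(R_G)_{\mathfrak m}$, so $\pd_{R_{\mathfrak m}}((R_G)_{\mathfrak m}) \le \pd_R(R_G) < \infty$.

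With all hypotheses in place, Theorem~\ref{thm:freeness} gives that $(R_G)_{\mathfrak m} \cong (R_{\mathfrak m})_{G_{\mathfrak m}}$ is free over $R_{\mathfrak m}$. As $\mathfrak m$ was arbitrary, $R_G$ is locally free at every maximal ideal, hence a finite projective $R$-module, and the final sentence is then immediate. I do not expect a serious obstacle: the argument is essentially bookkeeping around the local--global dictionary, and the only genuine care needed is in the base-change step---tracking how the principal edge labels behave under localization (in particular that some may become the unit ideal) and confirming that the resulting reduction to a graph with proper labels is harmless.
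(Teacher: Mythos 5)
Your proof is correct and follows essentially the same route as the paper's: localize at each maximal ideal, invoke flat base change to identify $(R_G)_{\mathfrak m}$ with a spline module over $R_{\mathfrak m}$, apply Theorem~\ref{thm:freeness}, and conclude projectivity from local freeness plus finite presentation. If anything, you are more careful than the paper on the minor points (edge labels becoming unit ideals after localization, nonvanishing of the localized spline module, and the descent of finite projective dimension to localizations), but the overall argument is the same.
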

\begin{proof}
Since $R$ is a domain, every non-zero principal ideal is generated by a regular element. Let $\mathfrak m$ be a maximal ideal of $R$. Now by the stability of spline structure morphisms under flat base change \cite[Main Theorem]{stoltz2025scheme} we have that $(R_G)_{\mathfrak m} \cong (R_{\mathfrak m})_{G_{\mathfrak m}}$. That is, $(R_G)_{\mathfrak m}$ is isomorphic to a module of splines over the local ring $R_{\mathfrak m}$ that has edge-labeled graph $G_{\mathfrak m}$. By hypothesis $(R_G)_{\mathfrak m} \cong(R_{\mathfrak m})_{G_{\mathfrak m}}$ satisfies the conditions of Theorem~\ref{thm:freeness}, and is therefore free.

Furthermore, each $(R_G)_{\mathfrak m} \cong (R_{\mathfrak m})_{G_{\mathfrak m}}$ is the kernel of a potential short exact sequence, for which the middle term is free, and since each $R_{\mathfrak m}$ is Noetherian \cite[Lemma 10.31.1]{stacks-project} we have that $(R_G)_{\mathfrak m}$ is finitely generated. Now for modules over Noetherian rings, finitely generated implies finitely presented \cite[Lemma 10.31.4]{stacks-project} and therefore each $(R_G)_{\mathfrak m} \cong (R_{\mathfrak m})_{G_{\mathfrak m}}$ is also finitely presented. 

So we have established that for each maximal ideal $\mathfrak m$, we have that $(R_G)_{\mathfrak m}$ is free and of finite presentation. Now any $R$-module that is free and of finite presentation locally at each maximal ideal of $R$ is also finite projective \cite[Lemma 10.78.2(5)]{stacks-project}, so it follows that $R_G$ is a finite projective $R$-module as claimed.
\end{proof}

As a corollary, we obtain that the following rings have a free module of splines for any finite edge-labeled graph with principal ideal labels.

\begin{corollary}
Let $R$ be either $k[x,y]$ or $k[x_1,...x_n]/\langle f \rangle$, with $f=0$ defining an irreducible nonsingular dimension $2$ surface, and $k$ a field. For any edge-labeled graph $G$ over $R$ with principal edge labels, the module of splines $R_G$ is free.
\end{corollary}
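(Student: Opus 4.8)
The plan is to deduce the statement from Theorem~\ref{thm:main_theorem}, so the work consists of verifying its hypotheses for each of the two families of rings and then supplying the freeness of finite projective modules. One small preliminary reduction is needed because the corollary permits the zero ideal as an edge label while Theorem~\ref{thm:main_theorem} does not (the element $0$ is not regular): if an edge $e=uv$ has $I_e=\langle 0\rangle$, then every spline is forced to agree at $u$ and $v$, so contracting all such edges yields an edge-labeled graph $G''$ carrying exactly the nonzero principal labels of $G$ (any resulting loops being discarded as vacuous) together with a natural isomorphism $R_G\cong R_{G''}$. Hence I may assume all edge labels are nonzero principal ideals.

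Next I would check the ring hypotheses. The ring $k[x,y]$ is Noetherian by the Hilbert basis theorem, a domain, and regular, hence Cohen--Macaulay, of Krull dimension $2$. In the second case, ``nonsingular'' means $\Spec R$ is a regular scheme, so $R$ is a regular ring, hence Cohen--Macaulay and reduced; reducedness forces $\langle f\rangle$ to be radical, and since $\Spec R$ is irreducible $\langle f\rangle$ is prime, so $R$ is a domain, and $R$ is Noetherian as a quotient of a polynomial ring. In both cases $R$ is a finitely generated domain over the field $k$, so by Noether normalization it is equidimensional with $\dim R_{\mathfrak m}=\dim R$ at every maximal ideal $\mathfrak m$, and $\dim R=2$ (trivially for $k[x,y]$, by hypothesis for the surface). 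Thus $R$ has Krull dimension $2$ at each maximal ideal, as Theorem~\ref{thm:main_theorem} requires.

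Finally, since $R$ is regular it has finite global dimension, so every finitely generated $R$-module---in particular $R_G$---has finite projective dimension (and $R_G\neq 0$ whenever $V\neq\emptyset$, as it contains the diagonal copy of $R$; the empty graph gives the trivially free module). Theorem~\ref{thm:main_theorem} then shows $R_G$ is a finite projective $R$-module, and it remains to observe that finitely generated projective modules over $R$ are free: this is the Quillen--Suslin theorem when $R=k[x,y]$, and the analogous known freeness statement for the coordinate ring of the given smooth affine surface in the second case. I expect the only genuinely deep ingredient to be this last step---everything else is routine verification that regularity together with finite type over a field supplies precisely the hypotheses of Theorem~\ref{thm:main_theorem}---and the main obstacle to be citing, in the surface case, the appropriate result guaranteeing that all finitely generated projective modules over that ring are free.
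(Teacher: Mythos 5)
Your argument follows essentially the same route as the paper's: verify that $R$ is a Noetherian Cohen--Macaulay domain of Krull dimension $2$ at every maximal ideal (via regularity of the coordinate ring of a nonsingular surface), use finite global dimension of the regular ring $R$ to get $\pd_R(R_G)<\infty$, and feed everything into Theorem~\ref{thm:main_theorem}. Your preliminary contraction of edges labeled $\langle 0\rangle$ is a sensible addition that the paper omits (Theorem~\ref{thm:main_theorem} does require \emph{non-zero} principal labels, while the corollary as stated does not), and your remark that $R_G$ is nonzero because it contains the diagonal copy of $R$ plugs a small hole in the appeal to Theorem~\ref{thm:freeness}.

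The genuine problem is your final step, and you have correctly sensed where the difficulty lies but resolved it with a claim that is not available. For $R=k[x,y]$ the Quillen--Suslin theorem does convert ``finite projective'' into ``free,'' and this is exactly what the paper invokes in the discussion following the corollary. But in the hypersurface case there is no ``analogous known freeness statement'': a smooth irreducible affine surface of the form $k[x_1,\dots,x_n]/\langle f\rangle$ can have nontrivial Picard group. For instance, $R=k[x,y,z]/\langle y^2-x^3-ax-b\rangle$ is (for suitable $a,b$ and $\operatorname{char}k\neq 2,3$) the coordinate ring of $E_{\mathrm{aff}}\times\mathbb{A}^1$ for an affine elliptic curve $E_{\mathrm{aff}}$, and $\Pic(R)\cong\Pic^0(E)$ is nontrivial, so $R$ carries non-free finitely generated projective modules of rank $1$. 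Hence Theorem~\ref{thm:main_theorem} only delivers that $R_G$ is finite projective in the surface case, and the passage to freeness requires an argument specific to the modules $R_G$ (or an added hypothesis on $\Pic(R)$), not a blanket freeness theorem for projectives over $R$. To be fair, the paper's own proof has the identical defect---it asserts ``the claim follows'' from Theorem~\ref{thm:main_theorem} without supplying the freeness-of-projectives input for the surface case---so your write-up faithfully reproduces the published argument, gap included; but as a complete proof of the stated corollary the second family of rings is not handled.
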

\begin{proof}
Let  $R$, $G$, and $R_G$ be given as specified. Note that by hypothesis $R$ is the coordinate ring of a nonsingular surface, and is therefore a regular ring \cite[Definition 28.9.1, Lemma 28.9.2(3)]{stacks-project}. Since regular rings are Cohen--Macaulay \cite[Example 2.2]{mathew_homological}, we can see $R$ is Cohen--Macaulay. We also know that $R$ is Krull dimension $2$ at each maximal ideal by hypothesis. Furthermore, since $R$ is a regular ring, it has finite global dimension, which implies that $R_G$ has finite projective dimension \cite[Lemma 10.110.8, Definition 10.109.2]{stacks-project}. Finally, by the irreducibility hypothesis we know $R$ is an integral domain, and it is Noetherian since the quotient of a Noetherian ring is Noetherian. So the criteria of Theorem~\ref{thm:main_theorem} are satisfied and the claim follows.
\end{proof}

Consequently, every module of multivariate generalized splines in two variables is free, provided the edge-labeled graph has principal edge labels; Alt{\i}nok and Sar{\i}o\u{g}lan \cite{altinok2024multivariate} previously showed every cycle with principal edge labels had a free spline module, and the author later proved that freeness held when the graph was locally trivial or determined by a cycle \cite{stoltz2025scheme}. The corollary now settles the matter for $k[x,y]$ entirely, as by the Quillen--Suslin theorem projective modules over polynomial rings are free. Note that the author was motivated to use projective dimension locally after seeing the success of Alt{\i}nok and Sar{\i}o\u{g}lan's global homological approach in the cycle case \cite{altinok2024multivariate}. Thus, the primary application is, in spirit, a local analog of Alt{\i}nok and Sar{\i}o\u{g}lan's work, and effectively generalizes the original Billera and Rose local--global approach on stars \cite[Theorem 2.3]{billera1991modules}.

\section{Conclusion}
We have established the basics of a theory of generalized splines over Noetherian local rings from the homological perspective. In particular, that the potential and copotential exact sequences are a homological control mechanism afforded to the generalized spline structure. These exact sequences allow manipulation of depth, which features as an input into the Auslander--Buchsbaum formula and thereby regulates projectivity of generalized spline modules locally.

In the case of low depth rings, the inheritance of associated primes by submodules accounts for the indifference of certain generalized spline modules to the graph structure. That is because when the potential cospline module has depth $1$, the depth of the cospline submodules (picked out by a particular configuration of edges) is constrained above $0$ by the absence of an associated prime in the parent module. Thus, this is often enough to force the Auslander--Buchsbaum formula to induce a projective dimension of $0$ in the local situation, and thereby force local freeness. When the depth of a ring swells, projective dimension $0$ requires an ever greater depth for the spline module, which provides more opportunities for the graph structure to thwart local projectivity.

Currently, we have not considered how graph features regulate the depth of the cospline and failed cospline modules. For example, it is known that certain graphs, such as cycles with certain well-behaved degree, have free spline modules \cite{altinok2024multivariate}. It is not clear how this manifests itself with respect to the depth of the modules that appear within the potential and copotential exact sequences. Other avenues for study include the use of well-established cohomological tools, such as spectral sequences, which we did not consider.


\end{document}